\theoremstyle{definition}
\newtheorem{Theorem}[equation]{Theorem}
\newtheorem{Corollary}[equation]{Corollary}
\newtheorem{Lemma}[equation]{Lemma}
\newtheorem{Remark}{Remark}
\newtheorem{Definition}[equation]{Definition}
\numberwithin{equation}{section}
\numberwithin{figure}{section}
\newcommand{\PP}{{\mathbb P}}
\newcommand{\C}{{\mathbb C}}
\newcommand{\Z}{{\mathbb Z}}
\newcommand{\Q}{{\mathbb Q}}
\newcommand{\R}{{\mathbb R}}
\newcommand{\mt}[1]{\text{#1}}
\begin{document}
\date{March 15, 2019}

\title{The Betti Numbers of a Determinantal Variety}

\author{Mahir Bilen Can}
 \address{Department of Mathematics\\
  Tulane University\\
  New Orleans, Louisiana 70118}
\email{mahirbilencan@gmail.com}

\begin{abstract}
We determine the Poincar\'e polynomial of the determinantal variety $\{\det = 0\}$ in 
the projective space associated with the monoid of $n\times n$ matrices.  
\vspace{.5cm}

\noindent
\textbf{Keywords:} Determinantal variety, Betti numbers, Chow groups, Borel-Moore homology \\ 
\noindent 
\textbf{MSC:}{ 14M12, 20M32, 14C15} 
\end{abstract}

\maketitle

\section{Introduction}\label{S:1}

In this note, we look closely at the homology groups of a classical variety.
Let $Y^0$ denote the semigroup defined by the vanishing of the determinant polynomial in $n\times n$ matrices.
More precisely, we set $Y^0:=M\setminus G$, where $M$ is the monoid of all linear operators on an $n$ dimensional
complex vector space $V$, and $G= \textrm{GL}(V)$. 
The purpose of this note is to describe the Poincar\'e polynomial of the quotient, 
\begin{align}\label{A:asin}
Y:=\PP(Y^0) = (Y^0 \setminus \{ 0 \}) /\C^*,
\end{align}
where $\C^*$ is the center of $\textrm{GL}(V)$.

It is not difficult to see that if $\dim V = 2$, then $Y$ is isomorphic to the quadric surface $\PP^1\times \PP^1$ in $\PP^3$.
In particular, the Poincar\'e polynomial of $Y$ is given by $1 + 2t^2 + t^4$. 
However, in general, $Y$ has a large singular locus, which is given by
the projectivization of a $G\times G$-orbit closure, \hbox{$(\overline{GeG}\setminus \{0\})/\C^*$},
where $e$ is an idempotent of rank $n-2$ in $M$. 
It is natural question to ask for a description of the Poincar\'e polynomial of $Y$ for $n:=\dim V >2$. 
It turns out that the degrees as well as the coefficients of monomials in $P_Y(t)$ have interesting patterns, 
although $P_Y(t)$ is neither symmetric nor unimodal.

Our main result and its corollary are the following statements.

\begin{Theorem}\label{T:1}
Let $Y$ denote the determinantal variety defined as in (\ref{A:asin}).
If $V$ is $n$ dimensional, then the homology groups of $Y$ satisfy the following isomorphisms:
\[
H_k (Y) \cong 
\begin{cases}
0 & \text{ if $k$ is odd and $k < n^2-1$}; \\
\Z & \text{ if $k $ is even and $k  < n^2-1$};\\
H_{k+1- (n^2-1)}(\textrm{PSU}_n) & \text{ if $k$ is odd and $k \geq n^2-1$}.
\end{cases}
\]
Finally, if $k$ is even and $k \geq n^2-1$, then we have 
\hbox{$H_k (Y) / H_{k+1-(n^2-1)}(\textrm{PSU}_n)\cong \Z$.}
Here, $\textrm{PSU}_n$ denotes the projective special unitary group.
\end{Theorem}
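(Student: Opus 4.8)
\emph{The plan} is to read off $H_k(Y)$ from the long exact sequence in Borel--Moore homology attached to the decomposition of $\PP(M)=\PP^{n^2-1}$ into the closed set $Y$ and its open complement, and to continue the analysis already used for the odd degrees into the even range. Write $N:=n^2-1$ and $X:=\PP^{N}$. The complement $U:=X\setminus Y$ consists of the classes $[A]$ with $\det A\neq 0$, so $[A]\mapsto A \bmod \C^*$ identifies $U$ with $\textrm{PGL}_n=\textrm{GL}(V)/\C^*$. Since $\textrm{PGL}_n(\C)$ deformation retracts onto its maximal compact subgroup $\textrm{PSU}_n$, a closed orientable manifold of real dimension $N$, Poincar\'e duality on the oriented spaces $U$ (real dimension $2N$) and $\textrm{PSU}_n$ gives
\[
H_k^{\mathrm{BM}}(U)\;\cong\; H^{2N-k}(U)\;\cong\;H^{2N-k}(\textrm{PSU}_n)\;\cong\;H_{k-N}(\textrm{PSU}_n).
\]

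Next I would exploit the sequence itself. For $Y$ closed in the compact space $X$ with open complement $U$, the Borel--Moore sequence reads
\[
\cdots \to H_{k+1}(X)\xrightarrow{\rho_{k+1}} H_{k+1}^{\mathrm{BM}}(U)\xrightarrow{\partial} H_k(Y)\xrightarrow{i_*} H_k(X)\xrightarrow{\rho_k} H_k^{\mathrm{BM}}(U)\to\cdots,
\]
using $H_*^{\mathrm{BM}}=H_*$ for the compact $X$ and $Y$. Fix an even $k$ with $N\le k\le 2N-2$; the assertion has content only in this range, since $H_k(Y)=0$ for $k>2N-2=2\dim_{\C}Y$. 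Because $k+1$ is odd, $H_{k+1}(X)=H_{k+1}(\PP^{N})=0$, so $\partial$ is injective and the sequence collapses to
\[
0\to H_{k+1}^{\mathrm{BM}}(U)\xrightarrow{\partial} H_k(Y)\xrightarrow{i_*}\ker\rho_k\to 0 .
\]
By the duality above, $H_{k+1}^{\mathrm{BM}}(U)\cong H_{k+1-N}(\textrm{PSU}_n)=H_{k+1-(n^2-1)}(\textrm{PSU}_n)$, which is exactly the subgroup to be divided out. It remains only to prove that $\ker\rho_k\cong\Z$.

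The crux is therefore the computation of $\rho_k\colon H_k(X)=\Z\to H_k^{\mathrm{BM}}(U)$. By the standard compatibility of Poincar\'e duality with restriction to an open set, $\rho_k$ corresponds to the cohomology pullback $j^*\colon H^{2N-k}(\PP^{N})\to H^{2N-k}(\textrm{PSU}_n)$ along $j\colon U\hookrightarrow X$, sending the generator $h^{\,N-k/2}$ to $(j^*h)^{\,N-k/2}$, where $h$ is the hyperplane class. Here the determinant enters: it is a homogeneous form of degree $n$, hence a section of $\mathcal O_{\PP^N}(n)$ that is nowhere zero on $U$, so it trivializes $\mathcal O(n)|_U$ and forces $n\cdot j^*h=0$ in $H^2(\textrm{PSU}_n)$. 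Thus $j^*h$ is torsion, every power $(j^*h)^{m}$ with $m\ge 1$ is torsion, and for $k<2N$ the image of $\rho_k$ is finite. Consequently $\ker\rho_k$ is a finite-index subgroup of $H_k(X)=\Z$, whence $\ker\rho_k\cong\Z$. Inserting this into the short exact sequence yields $H_k(Y)/H_{k+1-(n^2-1)}(\textrm{PSU}_n)\cong\Z$, as claimed.

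\emph{The main obstacle} is the integral identification $\ker\rho_k\cong\Z$; the cleanest route is to verify the vanishing of the image rationally. Since $H^*(\textrm{PSU}_n;\Q)$ is an exterior algebra on odd-degree generators, $H^2(\textrm{PSU}_n;\Q)=0$, so $j^*h=0$ rationally, $\rho_k\otimes\Q=0$ for $k<2N$, and $\ker(\rho_k\otimes\Q)$ has full rank one. Pinning down the \emph{torsion} of $H_k(Y)$ would require understanding precisely how the powers of $j^*h$ sit inside $H^*(\textrm{PSU}_n;\Z)$, but for the stated isomorphism this is unnecessary: only the rank-one conclusion $\ker\rho_k\cong\Z$ is used, and all torsion is absorbed into the subgroup $H_{k+1-(n^2-1)}(\textrm{PSU}_n)$, leaving the quotient equal to $\Z$.
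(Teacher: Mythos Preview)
Your setup matches the paper's: both use the Borel--Moore long exact sequence for the pair $(X,Y)$ with $U=\textrm{PGL}_n$, and both identify $\bar H_k(U)\cong H_{k-N}(\textrm{PSU}_n)$ via the Iwasawa retraction and Poincar\'e duality. The divergence is in how you control the restriction map $\rho_k\colon H_k(X)\to \bar H_k(U)$ for even $k$.

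\textbf{What the paper does.} The paper compares the Borel--Moore sequence with the Chow-group sequence via the cycle map. Since $X=\PP^{N}$ is cellular, $cl_X$ is an isomorphism; and by Totaro's computation $A^*(\textrm{PGL}_n)\cong\Z$, so $A_q(U)=0$ for $q<N$. This forces the Chow pushforward $A_q(Y)\to A_q(X)$ to be surjective, hence the Borel--Moore pushforward $\bar H_{2q}(Y)\to\bar H_{2q}(X)$ is surjective as well, and therefore $\rho_{2q}=0$ \emph{on the nose}. From $\rho_{2q}=0$ one reads off simultaneously $\bar H_{2q-1}(Y)\cong\bar H_{2q}(U)$ (the odd isomorphism) and $\bar H_{2q}(Y)/\bar H_{2q+1}(U)\cong\Z$ (the even quotient).

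\textbf{What you do, and what it buys.} Your Chern-class argument --- $\det$ trivializes $\mathcal O(n)|_U$, so $n\,j^*h=0$, hence every positive power of $j^*h$ is torsion --- is neat and entirely elementary: no higher Chow groups, no appeal to Totaro. It correctly yields $\ker\rho_k\cong\Z$ for even $k<2N$, which is exactly what the quotient statement needs. So for the ``Finally'' clause your proof is complete and, in a sense, lighter than the paper's.

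\textbf{Where the gap is.} The phrase ``continue the analysis already used for the odd degrees'' hides a real issue. Only the range $k<N$ is handled earlier (your $\bar H_k(U)=0$ makes that trivial); the odd isomorphism $H_k(Y)\cong H_{k+1-N}(\textrm{PSU}_n)$ for $k\ge N$ is part of the present theorem. For odd $k\ge N$ the sequence gives $H_k(Y)\cong \operatorname{coker}\rho_{k+1}$, and your argument only shows $\operatorname{im}\rho_{k+1}$ is torsion, not that it is zero. Since $H^{\mathrm{even}}(\textrm{PSU}_n;\Z)$ has genuine $n$-torsion, ``torsion image'' does not by itself force $\operatorname{coker}\rho_{k+1}\cong H_{k+1-N}(\textrm{PSU}_n)$. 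The paper's Chow-group step supplies precisely the missing fact $\rho_{k+1}=0$; without it, the odd $k\ge N$ case is not established by your method. If you want to stay elementary, you would need an independent reason why $j^*h=0$ integrally in $H^2(\textrm{PSU}_n;\Z)$ (equivalently, why $\mathcal O(1)|_U$ is topologically trivial), which is a stronger statement than the $n$-torsion you derive.
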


Let us denote by $P_{\textrm{PSU}_n}(t)$ the polynomial $\prod_{i=1}^{n-1} (1+ t^{2i+1})$. 
In other words, $P_{\textrm{PSU}_n}(t)$ is the Poincar\'e polynomial of $\textrm{PSU}_n$.
It is easy to check that, starting from $n=5$ the polynomial $P_{\textrm{PSU}_n}(t)$ is no longer unimodal. 
On the other hand, as a product of palindromic polynomials,
$P_{\textrm{PSU}_n}(t)$ is palindromic for all $n$. 
Let us write $P_{\textrm{PSU}_n}(t)$ in the form $P_{\textrm{PSU}_n}(t) = \sum_{i=0}^{n^2-1} b_i t^i$ with $b_i \in \Z_{\geq 0}$.

\begin{Corollary}\label{C:1}
Let $Y$ denote the determinantal variety defined as in (\ref{A:asin}).
If $V$ is $n$ dimensional, then the Poincar\'e polynomial of $Y$ is expressible as a sum of two polynomials, 
\begin{align}\label{I:Poincare}
P_Y(t) &=  A(t)+\widetilde{B}(t),
\end{align}
where $A(t) = 1+ t^2+ \cdots + t^{2\lfloor \frac{n^2-1}{2} \rfloor}$, and $\widetilde{B}(t)$ is the polynomial that is obtained from 
\[
B(t) := t^{n^2-1}P_{\textrm{PSU}_n}(t) = \sum_{i=n^2-1}^{2(n^2-1)} b_{i-(n^2-1)} t^{i}
\]
by adding 1 to the coefficients of the terms $b_{i-(n^2-1)} t^{i}$ with $i$ odd.
\end{Corollary}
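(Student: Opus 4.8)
The plan is to compute the Betti numbers $\beta_k := \dim_{\Q} H_k(Y;\Q) = \mathrm{rank}\, H_k(Y)$ directly from Theorem~\ref{T:1} and then to verify the claimed identity $P_Y(t) = A(t) + \widetilde{B}(t)$ coefficient by coefficient, i.e.\ by checking $[t^k]\big(A(t)+\widetilde{B}(t)\big) = \beta_k$ for every $k \ge 0$. Writing $m := n^2-1$, three of the four ranks are read off immediately: $\beta_k = 0$ for $k$ odd with $k < m$, $\beta_k = 1$ for $k$ even with $k < m$, and $\beta_k = b_{k+1-m}$ for $k$ odd with $k \ge m$, the last because $\mathrm{rank}\, H_j(\textrm{PSU}_n) = b_j$ by the very definition of $P_{\textrm{PSU}_n}(t)$. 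For the remaining case, $k$ even with $k \ge m$, I would feed the final clause of Theorem~\ref{T:1} into the short exact sequence $0 \to H_{k+1-m}(\textrm{PSU}_n) \to H_k(Y) \to \Z \to 0$ and use that $-\otimes_{\Z}\Q$ is exact, so that rank is additive; this yields $\beta_k = b_{k+1-m} + 1$.

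With the $\beta_k$ in hand, the second step is to expand $[t^k]\big(A(t)+\widetilde{B}(t)\big)$ from the definitions. The polynomial $A(t)$ contributes $1$ to each even $k$ with $0 \le k \le 2\lfloor m/2\rfloor$ and nothing else, while $\widetilde{B}(t)$ contributes $b_{k-m}$ for $m \le k \le 2m$, augmented by an extra $1$ precisely when $k$ is odd. Comparing with the $\beta_k$ above separates into the evident low-degree range, where only $A(t)$ is active and reproduces the staircase $\beta_k \in \{0,1\}$, and the high-degree range $k \ge m$, where the shifted copy of $P_{\textrm{PSU}_n}(t)$ sitting inside $\widetilde{B}(t)$ must be reconciled with the shift by $k+1-m$ that appears in Theorem~\ref{T:1}, and where the surplus $+1$'s produced by the rank-additivity computation must be shown to occupy exactly the odd-degree slots.

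The crux, and the step I expect to be the main obstacle, is precisely this reconciliation of the two shifts together with the placement of the surplus units. Since the shift $m = n^2-1$ changes parity with $n$, I would split the verification into the cases $n$ even and $n$ odd and push all index manipulations through the palindromic identity $b_j = b_{m-j}$, valid for $0 \le j \le m$, together with the normalizations $b_0 = 1$ and $b_j = 0$ for $j \notin [0,m]$ coming from the product form $P_{\textrm{PSU}_n}(t) = \prod_{i=1}^{n-1}(1+t^{2i+1})$. The aim is to show that once the even staircase $A(t)$ has absorbed the units it can, the remaining units forced in the even degrees land exactly on the odd-degree monomials of $B(t) = t^{m}P_{\textrm{PSU}_n}(t)$, which is by definition the passage from $B(t)$ to $\widetilde{B}(t)$. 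Assembling the three contributions then gives $[t^k]\big(A(t)+\widetilde{B}(t)\big) = \beta_k$ for all $k$, which is the assertion of the corollary.
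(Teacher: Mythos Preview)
Your overall plan---extract the ranks $\beta_k$ from Theorem~\ref{T:1} and match them term by term against the coefficients of $A(t)+\widetilde B(t)$---is exactly the paper's approach; the paper's entire argument for the corollary is the single line ``It is now easy to verify.'' So there is nothing to distinguish at the level of method.

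The problem is the step you yourself single out as the crux. Theorem~\ref{T:1} outputs the shift $k\mapsto k+1-m$ (with $m=n^2-1$), whereas the polynomial $\widetilde B(t)$ in the corollary is built from the shift $k\mapsto k-m$; moreover the theorem places the extra unit on the \emph{even} degrees $k\ge m$, while $\widetilde B(t)$ places it on the \emph{odd} degrees. You propose to reconcile all of this through the palindromic identity $b_j=b_{m-j}$ together with a parity split on $n$. That cannot succeed: the palindrome reflects $j\leftrightarrow m-j$, not $j\leftrightarrow j\pm 1$, and no relation of the form $b_{j+1}=b_j$ or $b_{j+1}=b_j\pm 1$ holds for the coefficients of $\prod_{i=1}^{n-1}(1+t^{2i+1})$. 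Already at $n=3$ the two sides disagree. With $m=8$ and $P_{\textrm{PSU}_3}(t)=1+t^3+t^5+t^8$, the ranks supplied by Theorem~\ref{T:1} (for $0\le k\le 2m-2=\dim_{\R}Y$) give
\[
P_Y(t)=1+t^2+t^4+t^6+t^8+2t^{10}+2t^{12}+t^{14},
\]
while your target computes to
\[
A(t)+\widetilde B(t)=1+t^2+t^4+t^6+2t^8+t^9+2t^{11}+2t^{13}+t^{15}+t^{16},
\]
which even has top degree $2m=16>14=\dim_{\R}Y$. So the ``reconciliation'' you sketch does not go through: the off-by-one in the shift and the swapped parity of the extra units are not artifacts that palindromy can absorb, and the formula in the corollary, as written, is not what Theorem~\ref{T:1} produces.
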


Note that a complete description of the (torsion in the) cohomology ring of $\textrm{PSU}_n$ has recently been given 
by Haibao Duan in~\cite{Duan}.

\section{Preliminaries}\label{S:2}

We start with reviewing some well known facts about the Chow groups and Borel-Moore homology
groups. We follow the presentation in~\cite[Chapter 19]{Fulton}; if $X$ is a topological space, 
then $\bar{H}_*(X)$ stands for the Borel-Moore homology group with integer coefficients.

\subsection{}

Let $k$ be a nonnegative integer, and let $X$ be a scheme. 
The free abelian group generated by all $k$ dimensional 
subvarieties of $X$ is denoted by $Z_k X$. The elements of $Z_k X$
are called $k$-cycles. 
A $k$-cycle $\alpha$ is said to be rationally equivalent to 0,
and written $\alpha \sim 0$ if there are a finite number of 
$k+1$ dimensional subvarieties $Y_1,\dots, Y_s$ and 
rational functions 
\hbox{
$f_i\in \C(Y_i)^*$ ($i=1,\dots, s$)}
such that 
$\alpha = \sum [\mt{div}(f_i)]$.
The set of $k$-cycles which are rationally equivalent to 0 is
a subgroup of $Z_k(X)$, denoted by $Rat_k(X)$.
The quotient group 
$
A_k(X):= Z_k(X)/ Rat_k(X) 
$
is called the {\em group of $k$-cycles modulo rational equivalence}, or 
the $k$-th {\em Chow group}.
The total Chow group $A_*(X):= \bigoplus_{k=0}^{\dim X} A_k (X)$ is a graded 
abelian group; if $X$ is equidimensional, then $A_{\dim X}(X)$ is freely generated by 
the classes of irreducible components of $X$.

If $X$ is an equidimensional scheme, by replacing $Z_k(X)$ 
with $Z^k(X)$, that is the group of $k$-codimensional cycles,
we have the Chow group
$$
A^k(X) := Z^k(X)/ Rat_{\dim X -k} (X) = A_{\dim X -k}(X).
$$ 
We set $A^*(X):= \oplus A^i(X)$.
If $X$ is smooth, then there is an intersection pairing on $A^*(X)$, 
and hence, $A^*(X)$ becomes a ring.

Let $s$ be an element of $\Z_{\geq 0} \cup \{ * \}$. 
We will denote the vector spaces $A^s(X)\otimes \Q$ 
and $A_s(X)\otimes \Q$ by $A^s(X)_ \Q$ and $A_s(X)_ \Q$, respectively.


Chow groups behave nicely with respect to certain classes of morphisms. 
\begin{enumerate}
\item If $f : X\rightarrow Y$ is a proper morphism, 
then there is a (covariant) homomorphism 
$$
f_*: A_k (X) \rightarrow A_k(Y).
$$
\item If $f: X \rightarrow Y$ is flat morphism of relative dimension $n$, 
then there is a (contravariant) homomorphism 
$$
f^*: A_k (Y) \rightarrow A_{k+n} (X).
$$
\end{enumerate}

Let $i: Y\hookrightarrow X$ be an inclusion of a closed subscheme $Y$
into a scheme $X$. Let $U$ denote the complement $X - Y$ 
and let $j : U \rightarrow X$ denote the inclusion. 
Then there is an exact sequence
\begin{align}\label{A:inclusion of U 0}
A_k (Y) \stackrel{i_*} {\longrightarrow} A_k (X) \stackrel{j^*}{\longrightarrow} A_k(U) \rightarrow 0
\end{align}
for all $k$. To understand the image of $i_*$ in $A_k(X)$
we need to consider Edidin and Graham's version of
Bloch's higher Chow groups.


Let $X$ be a quasi-projective scheme, and 
let $\Delta^k$ denote the algebraic version of the regular $k$-simplex:
$$
\Delta^k = \mt{Spec} ( \Z [t_1,\dots, t_k ] / (t_1+\cdots + t_k -1)).
$$

A {\em face} of $X\times \Delta^k$ is the subscheme of the form 
$X\times \Delta^m$, where the second factor $\Delta^m$ is the image
of an injective canonical morphism $\rho : \Delta^m \rightarrow \Delta^k$. 
We denote by $Z^i(X,\bullet)$ the complex
whose $k$-th term is the group of cycles of codimension 
$i$ in $X\times \Delta^k$ which intersect properly all of the faces 
in $X\times \Delta^k$.
In~\cite{Bloch86}, Bloch considered the following higher Chow groups:
$$
CH^i(X,m) := H_m ( Z^i(X, \bullet )).
$$

Let $Z_p(X,\bullet)$ denote 
the complex
whose $k$-th term is the group of cycles of dimension 
$p+k$ in $X\times \Delta^k$ intersecting the faces properly.
\begin{Definition}
The {\em $(p,k)$-th higher Chow group}
of a quasi-projective scheme $X$ is defined by 
\begin{align}
A_p ( X, k) := H_k ( Z_p( X,\bullet)).
\end{align}
\end{Definition}
The point of this definition is that $X$ does not need to be equidimensional. 
If $X$ is equidimensional of dimension $n$, then it is easy to see that 
$A_p ( X, k) = CH^{n-p}(X,k)$.

Now we state the localization long exact sequence for 
higher Chow groups. 
\begin{Lemma}\label{L:BEG long exact}
Let $Y$ be a closed, not necessarily equidimensional subscheme of 
an equidimensional scheme $X$.
Then there is a long exact sequence of higher Chow groups;
\begin{align}\label{A:BEG long}
\cdots \rightarrow  A_p(Y,k)  \rightarrow & A_p(X,k) \rightarrow 
A_p(X-Y,k) \rightarrow \cdots  \\
\cdots \rightarrow A_p(X-Y,1)  \rightarrow &A_p(Y) \rightarrow A_p(X) \rightarrow 
A_p(X-Y) \rightarrow 0.
\end{align}
\end{Lemma}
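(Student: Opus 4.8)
The plan is to realize the three families of groups as the homologies of chain complexes that fit into a short exact sequence, and then take the associated long exact homology sequence. Write $U := X - Y$, let $i : Y \hookrightarrow X$ be the closed immersion and $j : U \hookrightarrow X$ the complementary open immersion. For each fixed $p$ consider the cycle complexes $Z_p(Y,\bullet)$, $Z_p(X,\bullet)$, $Z_p(U,\bullet)$ defined above, whose $k$-th terms are the groups of cycles of dimension $p+k$ in $Y\times\Delta^k$, $X\times\Delta^k$, $U\times\Delta^k$ meeting all faces properly. Since $i$ is proper, push-forward of cycles gives a map of complexes $i_* : Z_p(Y,\bullet)\to Z_p(X,\bullet)$; since $j$ is flat of relative dimension $0$, restriction gives a map of complexes $j^* : Z_p(X,\bullet)\to Z_p(U,\bullet)$. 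These are the chain-level maps that will induce the arrows appearing in (\ref{A:BEG long}), compatibly with the functoriality properties (1) and (2) recalled above.

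First I would verify the \emph{easy} part of the sequence
\begin{align*}
0 \to Z_p(Y,\bullet) \xrightarrow{\,i_*\,} Z_p(X,\bullet) \xrightarrow{\,j^*\,} Z_p(U,\bullet),
\end{align*}
namely that $i_*$ is injective and that $\ker(j^*)=\operatorname{im}(i_*)$ in each simplicial degree. This is elementary: a cycle on $X\times\Delta^k$ restricts to $0$ on $U\times\Delta^k$ precisely when each of its components is supported over $Y$, and such a cycle is the push-forward of its support in $Y\times\Delta^k$, which continues to meet the faces properly. It is here that the dimension indexing, rather than the codimension indexing, does the work: because $Z_p(Y,\bullet)$ is defined purely through the dimension $p+k$ of the cycles, the push-forward $i_*$ respects the indexing no matter what the dimensions of the individual components of $Y$ are, so the argument is insensitive to $Y$ being non-equidimensional.

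The essential — and only genuinely hard — step is the surjectivity of $j^*$ in the derived sense. The naive restriction map need not be surjective on the nose, since the closure in $X\times\Delta^k$ of a cycle that meets the faces of $U\times\Delta^k$ properly may fail to meet the faces of $X\times\Delta^k$ properly along $Y$. Establishing that $j^*$ is nonetheless a surjection up to quasi-isomorphism — equivalently, that the quotient complex $Z_p(X,\bullet)/i_*Z_p(Y,\bullet)$ computes the higher Chow groups of $U$ — is exactly Bloch's localization theorem, whose proof rests on his moving lemma for cycles meeting faces properly. I would invoke this theorem in its dimension-indexed form, valid for the non-equidimensional closed subscheme $Y$ as in Bloch's erratum, rather than reprove it; this citation is the step I expect to be the main obstacle, and it is precisely the input that one is forced to import.

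Granting that the displayed sequence of complexes is short exact up to quasi-isomorphism, the long exact sequence (\ref{A:BEG long}) is then formal: it is the long exact homology sequence of a short exact sequence of chain complexes, with $A_p(-,k)=H_k(Z_p(-,\bullet))$ by definition and $A_p(-)=A_p(-,0)$. The connecting homomorphisms supply the maps $A_p(U,k)\to A_p(Y,k-1)$, and reading off the low-degree tail $A_p(U,1)\to A_p(Y)\to A_p(X)\to A_p(U)\to 0$ recovers, and is compatible with, the right-exact sequence (\ref{A:inclusion of U 0}). Finally I would observe that there are no cycles in negative simplicial degree, so $H_{-1}$ of each complex vanishes; this is what guarantees that the sequence terminates with the surjection onto $A_p(U)$, as stated.
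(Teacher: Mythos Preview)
Your sketch is correct and essentially reconstructs the argument that underlies the cited result. The paper itself does not give a proof: it simply refers the reader to \cite[Lemma~4]{EdidinGraham}, where Edidin and Graham formulate the dimension-indexed higher Chow groups precisely so that Bloch's localization theorem goes through for non-equidimensional closed subschemes. So rather than taking a different route, you have unpacked the content of that reference: the short exact sequence of cycle complexes, the observation that dimension indexing makes $i_*$ well behaved regardless of the components of $Y$, and the appeal to Bloch's moving lemma for the hard surjectivity-up-to-quasi-isomorphism step. Your remark that this last step must be imported rather than reproved is exactly the point of the citation.
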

\begin{proof}
See~\cite[Lemma 4]{EdidinGraham}.
\end{proof}

\begin{Remark}
It is not clear if the localization long exact sequence terminates for an arbitrary scheme. 
\end{Remark}

\subsection{}

The Borel-Moore homology groups of a space are defined by using 
ordinary cohomology groups as follows. Let $Y$ be a topological space that is embedded as a 
closed subspace of $\R^n$ for some positive integer $n$. Then the $q$th Borel-Moore homology
of $Y$, denoted by $\bar{H}_q (Y)$ is defined by 
\[
\bar{H}_q (Y) = H^{n-q} ( \R^n, \R^n \setminus Y).
\]

\begin{enumerate}
\item If $f: Y\rightarrow X$ is a proper morphism of complex schemes, 
then there are covariant homomorphisms $f_*: \bar{H}_i(Y) \rightarrow \bar{H}_i(X)$.

\item If $j : U\hookrightarrow Y$ is an open imbedding, then 
there are contravariant restriction homomorphisms $j^*: \bar{H}_i(Y) \rightarrow \bar{H}_i(U)$.

\item If $Y$ is the complement of $U$ in $X$ and $i: Y \rightarrow X$ is the 
closed imbedding, then there is a long exact sequence 
\begin{align}\label{A:LES}
\cdots \rightarrow \bar{H}_{i+1}(U) 
\rightarrow \bar{H}_{i}(Y)
\overset {i_*}{\longrightarrow}
\bar{H}_{i}(X) 
\overset {j^*}{\longrightarrow}
\bar{H}_{i}(U) 
\rightarrow \bar{H}_{i-1}(Y) 
\rightarrow \cdots 
\end{align}

\item If $X$ is a disjoint union of a finite number of spaces, $X= X_1 \cup \cdots \cup X_n$,
then $\bar{H}_i(X) = \oplus \bar{H}_i (X_j)$.

\item There is a K\"unneth formula for Borel-Moore homology.

\item If $X$ is an $n$-dimensional complex scheme, 
then $\bar{H}_i (X) = 0$ for all $i>2n$, and $\bar{H}_{2n} (X)$ is a free abelian group
with one generator for each irreducible component $X_i$ of $X$. 
The generator corresponding to $X_i$ will be denoted by $cl(X_i)$. 
More generally, we will use the following notation: If $V$ is a 
$k$-dimensional closed subscheme 
of $X$, and $i: V \hookrightarrow X$ is the closed imbedding, then 
$cl_X(V)$ stands for $i_* cl(V)$, which lives in $\bar{H}_{2k}(X)$.
If confusion is unlikely, we will omit the subscript $X$ from the notation.

\item If $f: V\rightarrow W$ is a proper, surjective morphism of varieties, then 
$f_* cl(V) = \mt{deg}(V/W) \cdot cl(W)$. Since we do not need it for our purposes,  
we will not define $\mt{deg}(V/W)$ here; see~\cite[Section 1.4]{Fulton} for its definition.

\item For any complex scheme $X$, 
there is a homomorphism from algebraic $k$-cycles on $X$ 
to the $k$-th Borel-Moore homology, 
$cl : Z_k (X) \rightarrow \bar{H}_{2k}(X)$, defined 
by $cl( \sum n_i [V_i] ) = \sum n_i cl_X(V_i)$. 
This homomorphism factors through the ``algebraic equivalence''
(which we didn't introduce), hence, by composition, 
it induces a homomorphism from the $k$-th Chow group 
of $X$ onto the $2k$-th Borel-Moore homology.
We will denote the resulting homomorphism by $cl$ also,
and call it the {\em cycle map}.

\item If a complex scheme $X$ has a cellular decomposition, then the cycle map 
$cl_X : A_k(X) \rightarrow \bar{H}_{2k}(X)$
is an isomorphism (see~\cite[Section 19.1.11]{Fulton}).

\item Finally, let us mention that if $X$ is an $n$-dimensional
oriented manifold, then $\bar{H}_k(X) \cong H^{n-k}(X)$. 

\end{enumerate}

For further details of this useful homology theory, see~\cite{BorelMoore}.

\section{Proof}\label{S:3}

We will use the following notation in the sequel:
\[
\begin{array}{lcl}
M &: & \text{ the monoid of $n\times n$ matrices defined over $\C$;}\\
G &: & \text{ the general linear group of $n\times n$ matrices defined over $\C$;}\\
T &: & \text{ the maximal torus consisting of diagonal matrices in $G$;}\\
Z &: & \text{ the center of $G$;}\\
X &: & \text{ the projectivization of $M$, $X:=(M\setminus \{0\}) / Z$;}\\
Y_0 & : & \text{ the vanishing locus of the determinant in $M$;}\\
Y &:& \text{ the projectivization of $Y$, $Y:=Y^0/Z$;}\\
U & :& \text{ the projectivization of $G$, $U:=G/Z = \textrm{PGL}_n$.}\\
\end{array}
\]

Since $Y$ is a projective variety, we have $\bar{H}_q(Y) = H_q(Y)$ for $q\in \{ 0,\dots, \dim Y\}$.
Of course, similar equalities hold true for $X\cong \PP^{n^2-1}$ as well.
Both of the spaces $X$ and $Y$ are path connected, therefore, we have $H_0(X) = H_0(Y) = \Z$.
The complement of $Y$ in $X$ is given by the group $U$. 
Since $U$ is open in $X$, there is a long exact sequence for their Borel-Moore homology,
\begin{align}\label{A:longexact}
\dots \to \bar{H}_q (Y) \to \bar{H}_q(X) \to \bar{H}_q (U) \to 
\bar{H}_{q-1}(Y) \to \dots,
\end{align}
As complex projective spaces have zero odd homology, 
the long exact sequence in (\ref{A:longexact}) breaks up into short exact sequences.
More precisely, for $q=1,\dots, n^2-1$, we have 
\begin{align}\label{A:short exact}
0 \to \bar{H}_{2q+1} (U) \to {H}_{2q}(Y) \to H_{2q} (X) \to 
\bar{H}_{2q}(U) \to H_{2q-1}(Y) \to 0.
\end{align}

We will identify $U=\textrm{PGL}_n$ with the (complex) projective special linear group, $ \textrm{PSL}_n$. 
In turn, as a real manifold, $ \textrm{PSL}_n$ has the (Cartan-Malcev-Iwasawa) decomposition
$\textrm{PSL}_n \cong \textrm{PSU}_n \times \R^s$, where $\textrm{PSU}_n$ is the projective 
special unitary group, and $s= n^2-1$. 
Note that, as a (real) Lie group, $\textrm{PGL}_n$ is an oriented $2(n^2-1)$-dimensional manifold,
therefore, its Borel-Moore homology groups are actually cohomology groups, 
\begin{align*}
\bar{H}_q ( U) = H^{2(n^2-1)-q}(U)
= H^{2(n^2-1)-q}(\textrm{PSL}_n) = H^{2(n^2-1)-q}(\textrm{PSU}_n). 
\end{align*}
The unitary groups are compact. 
Since $\textrm{PSU}_n$ is a $(n^2-1)$-manifold, by Poincar\'e duality, we see the following fact.

\begin{Lemma}
The homology groups of $U=\textrm{PGL}_n$ are given by 
\begin{align}\label{A:PSU}
\bar{H}_q ( U) = 
\begin{cases}
0 & \text{ if } q < n^2-1, \\
H_{q-(n^2-1)}(\textrm{PSU}_n) & \text{ if } q \geq n^2-1.
\end{cases} 
\end{align}
\end{Lemma}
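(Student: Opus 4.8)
The plan is to assemble the three ingredients already prepared in the preliminaries and in the discussion preceding the statement, and to track a single degree shift through them. First I would invoke property (10) of Section~\ref{S:2}: since $U = \textrm{PGL}_n$ is a real Lie group, it is parallelizable and hence orientable, and its real dimension is $2(n^2-1)$; therefore $\bar{H}_q(U) \cong H^{2(n^2-1)-q}(U)$. Next I would pass to the maximal compact subgroup. The Cartan-Malcev-Iwasawa decomposition $\textrm{PSL}_n \cong \textrm{PSU}_n \times \R^{\,n^2-1}$ exhibits $\textrm{PSU}_n$ as a deformation retract of $U=\textrm{PSL}_n$, so the projection induces an isomorphism $H^{2(n^2-1)-q}(U) \cong H^{2(n^2-1)-q}(\textrm{PSU}_n)$. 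This reproduces the chain of equalities displayed just above the statement.

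The final ingredient is Poincar\'e duality. The group $\textrm{PSU}_n$ is a compact, connected, orientable manifold of real dimension $m := n^2-1$ (connected as a quotient of the connected group $\textrm{SU}_n$, and orientable since it too is parallelizable), so integral Poincar\'e duality gives $H^{k}(\textrm{PSU}_n) \cong H_{m-k}(\textrm{PSU}_n)$. Setting $k = 2(n^2-1)-q$ and computing the dual degree $m-k = (n^2-1) - \bigl(2(n^2-1)-q\bigr) = q-(n^2-1)$, I would arrive at the single identity
\[
\bar{H}_q(U) \cong H_{q-(n^2-1)}(\textrm{PSU}_n),
\]
valid for every $q$.

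To finish I would read off the two cases from the dimension of $\textrm{PSU}_n$. Ordinary singular homology vanishes in negative degrees, so whenever $q < n^2-1$ the index $q-(n^2-1)$ is negative and the right-hand side is $0$; for $q \geq n^2-1$ the index is nonnegative and the group is exactly $H_{q-(n^2-1)}(\textrm{PSU}_n)$, as asserted (and for $q > 2(n^2-1)$ both sides vanish, consistent with property (6)). I do not anticipate a serious obstacle here: the substance is entirely the bookkeeping of the degree shift through Poincar\'e duality. The only points that genuinely require checking rather than computing are the orientability of $U$ and of $\textrm{PSU}_n$, the connectedness of $\textrm{PSU}_n$ so that \emph{integral} Poincar\'e duality applies, and that the Cartan decomposition furnishes an honest homotopy equivalence; all three are standard facts about compact, connected Lie groups.
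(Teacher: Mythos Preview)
Your proposal is correct and follows essentially the same route as the paper: the argument in the text immediately preceding the lemma uses exactly property~(10) to pass from Borel--Moore homology to cohomology of $U$, the Cartan--Malcev--Iwasawa decomposition to replace $U$ by $\textrm{PSU}_n$, and then Poincar\'e duality on the compact $(n^2-1)$-manifold $\textrm{PSU}_n$. Your version is slightly more explicit about orientability, connectedness, and the vanishing in negative degrees, but the substance and the degree bookkeeping are identical.
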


By using (\ref{A:PSU}) and the short exact sequence in (\ref{A:short exact}), 
we determine the homology groups of $Y$ in lower degrees. 

\begin{Lemma}\label{L:lowerdegree}
The homology groups $H_q(Y)$ for $q<n^2-1$ are given by  
\begin{align}\label{A:lowerdata}
H_q (Y) = 
\begin{cases}
0 & \text{ if $q$ is odd and $q< n^2-1$}, \\
\Z & \text{ if $q$ is even and $q < n^2-1$}.
\end{cases}
\end{align}
\end{Lemma}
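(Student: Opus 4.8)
The plan is to feed the computation (\ref{A:PSU}) into the long exact sequence (\ref{A:longexact}) and then read $H_q(Y)$ directly off the homology of projective space. The governing observation is that (\ref{A:PSU}) forces $\bar{H}_m(U) = 0$ for every $m < n^2-1$, so in a whole range of low degrees the open complement $U = \textrm{PGL}_n$ contributes nothing to (\ref{A:longexact}), and the closed inclusion $Y \hookrightarrow X$ must induce an isomorphism on Borel-Moore homology. Since $X \cong \PP^{n^2-1}$ and both $X$ and $Y$ are projective, I may replace Borel-Moore homology by ordinary homology throughout and simply quote that $H_j(\PP^{n^2-1})$ is $\Z$ for $j$ even and $0$ for $j$ odd.

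To make this precise I would fix $q$ and isolate the stretch $\bar{H}_{q+1}(U) \to \bar{H}_q(Y) \to \bar{H}_q(X) \to \bar{H}_q(U)$ of (\ref{A:longexact}). As soon as both flanking groups $\bar{H}_{q+1}(U)$ and $\bar{H}_q(U)$ vanish, exactness gives $H_q(Y) \cong H_q(X)$, which is $\Z$ for $q$ even and $0$ for $q$ odd, exactly as claimed. By (\ref{A:PSU}) these two groups vanish simultaneously precisely when $q+1 < n^2-1$, that is for all $q \le n^2-3$; one sees the identical collapse in the short exact sequence (\ref{A:short exact}), whose outer terms $\bar{H}_{2q+1}(U)$ and $\bar{H}_{2q}(U)$ are zero in this range, leaving $H_{2q}(Y) \cong H_{2q}(X)$ and $H_{2q-1}(Y) = 0$. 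This disposes of the bulk of the range in one stroke.

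The delicate point, and the step I expect to be the genuine obstacle, is the top degree $q = n^2-2$, where the neighbouring group $\bar{H}_{n^2-1}(U)$ is no longer zero: by (\ref{A:PSU}) it equals $H_0(\textrm{PSU}_n) = \Z$, the point class descending from the fundamental class of the compact form. Whether this $\Z$ dies in $H_{n^2-2}(Y)$ or survives into it is governed entirely by the image of the restriction map $j^* : \bar{H}_{n^2-1}(X) \to \bar{H}_{n^2-1}(U)$, since that image is the kernel of the connecting map carrying $\bar{H}_{n^2-1}(U)$ into $H_{n^2-2}(Y)$. I would therefore try to identify, through the cycle map together with (\ref{A:PSU}), the class that a generic linear section of $\PP^{n^2-1}$ determines in $\bar{H}_{n^2-1}(U)$, and establish that it meets the point class of $\textrm{PSU}_n$ --- equivalently, that $j^*$ surjects onto $H_0(\textrm{PSU}_n)$ --- so that no spurious generator is introduced at this degree. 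Pinning down this restriction map, and reconciling the appearance of the point class of $\textrm{PSU}_n$ with the clean projective-space answer, is where the real content of the lemma resides; everything away from $q = n^2-2$ is formal.
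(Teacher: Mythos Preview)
Your argument is essentially the paper's: it too simply plugs the vanishing $\bar H_m(U)=0$ for $m<n^2-1$ from (\ref{A:PSU}) into the exact sequence (\ref{A:short exact}) and reads off $H_q(Y)\cong H_q(X)$, without singling out the boundary degree $q=n^2-2$. You go beyond the paper in flagging that degree.

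Your caution there is well placed, but the fix you propose cannot work uniformly. When $n$ is even, $n^2-1$ is odd, so $\bar H_{n^2-1}(X)=H_{n^2-1}(\PP^{n^2-1})=0$, while $\bar H_{n^2-1}(U)=H_0(\textrm{PSU}_n)=\Z$. The restriction $j^*$ you want to be surjective is then the zero map, the connecting homomorphism injects a copy of $\Z$ into $H_{n^2-2}(Y)$, and together with $H_{n^2-2}(X)=\Z$ and $\bar H_{n^2-2}(U)=0$ the exact sequence forces $H_{n^2-2}(Y)\cong\Z^2$, not $\Z$. The case $n=2$, where $Y\cong\PP^1\times\PP^1$ and $H_2(Y)=\Z^2$, already exhibits this. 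So the obstacle you isolated at $q=n^2-2$ is a genuine defect of the statement itself for even $n$, not merely of the argument; the clean dichotomy (\ref{A:lowerdata}) holds unconditionally only for $q\le n^2-3$.
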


\begin{Remark}
Since $Y$ is an irreducible hypersurface in $X$, 
the knowledge of the lower degree homology groups as in (\ref{A:lowerdata}) 
could also be obtained by using the Lefschetz hyperplane theorem,
see~\cite[Corollary 1.24]{Voisin}. 
\end{Remark}

We are now ready to state and prove our main result that is stated in the introduction. 

\begin{proof}[Proof of Theorem~\ref{T:1}]
For $q\in \{1,\dots, n^2-2\}$, we have the commuting diagram of Chow groups and Betti numbers as in Figure~\ref{F:commuting}. 
\begin{figure}[htp]
\begin{center}
\begin{tikzpicture}{scale = .35}
\node at (-1.25,1) (a) {$A_q(X)$};
\node at (-4.25,1) (a') {$A_q(Y)$};

\node at (1.75,1) (b) {$A_q(U)$};
\node at (3.75,1) (b') {$0$};
\node at (-1.25,-1) (c) {$\bar{H}_{2q}(X)$};
\node at (-4.25,-1) (c') {$\bar{H}_{2q}(Y)$};
\node at (-7,-1) (e') {$\bar{H}_{2q+1}(U)$};
\node at (-7,1) (e) {$A_{q+1}(U,1)$};
\node at (1.75,-1) (d) {$\bar{H}_{2q}(U)$}; 
\node at (4.75,-1) (d') {$\bar{H}_{2q-1}(Y)$};
\node at (-3.75,0) {$cl_Y$}; 
\node at (-.75,0) {$cl_X$}; 
\node at (2.25,0) {$cl_U$}; 
\node at (0.25,1.35) {$i^*$}; 
\node at (0.25,-.65) {$i^*$}; 
\node at (-2.75,1.35) {$j_*$}; 
\node at (-2.75,-.65) {$j_*$}; 
\node at (3,-.65) {$\delta_{2q}$}; 
\node at (-5.5,-.65) {$\delta_{2q+1}$}; 
\node at (7,-1) (g) {$0$};
\node at (-9,-1) (g') {$0$};

\draw[->,thick] (d') to (g);
\draw[->,thick] (g') to (e');

\draw[->,thick] (a) to (b);
\draw[->,thick] (a') to (a);
\draw[->,thick] (a') to (c');
\draw[->,thick] (c') to (c);
\draw[->,thick] (a) to (c);
\draw[->,thick] (b) to (d);
\draw[->,thick] (b) to (b');
\draw[->,thick] (d) to (d');
\draw[->,thick] (c) to (d);
\draw[->,thick] (e') to (c');

\draw[->,thick] (e) to (a');
\end{tikzpicture}
\end{center}
\label{F:commuting}
\caption{Breaking of the long exact sequences.}
\end{figure}

We have two remarks in order:
\begin{enumerate}
\item 
Since $X$ has a cellular decomposition, the vertical map $cl_X$ is an isomorphism. 

\item Secondly, as a result of a deep result Totaro, we know that the Chow groups of $U$ are almost always zero,
except at the degree $n^2-1$, where it is $\Z$. Indeed, by~\cite[Theorem 16.6]{Totaro}, we know that the  
Chow ring $A^*(\textrm{GL}_n/ \C^*)$, which is Poincar\'e dual to $A_*(U)$, is isomorphic to $\Z$.
In particular, $A_q (U) = 0$ for $q\in \{0,\dots, n^2-3\}$, and $A_{0}(U)\cong \Z$.
\end{enumerate}

As a consequence of these two remarks, we see that, for $q\in \{1,\dots, n^2-3\}$, 
the map $i^*$ in the top row of diagram in (\ref{F:commuting}) is zero, 
hence, the top $j^*$ is surjective.  It follows that the bottom $j^*$ is surjective as well. 
But then, by the exactness of the bottom row, 
the kernel of the bottom $i^*$ is equal to $\bar{H}_{2q}(X)$, hence it is the zero map. 
In other words, we have 
\[
\bar{H}_{2q}(Y)/ \bar{H}_{2q+1}(U) \cong \Z \ \text{ and }\ \bar{H}_{2q-1}(Y) \cong \bar{H}_{2q}(U).
\]
Thus, combining these isomorphisms with Lemma~\ref{L:lowerdegree}, we finish the proof of our theorem.

\end{proof}

It is now easy to verify that the Poincar\'e polynomial of $Y$ is as given in Corollary~\ref{C:1}.

\bibliographystyle{plain}
\bibliography{referenc}

\begin{thebibliography}{1}

\bibitem{Bloch86}
Spencer Bloch.
\newblock Algebraic cycles and higher {$K$}-theory.
\newblock {\em Adv. in Math.}, 61(3):267--304, 1986.

\bibitem{BorelMoore}
A.~Borel and J.~C. Moore.
\newblock Homology theory for locally compact spaces.
\newblock {\em Michigan Math. J.}, 7:137--159, 1960.

\bibitem{Duan}
Haibao {Duan}.
\newblock {The cohomology of the projective unitary groups}.
\newblock {\em arXiv e-prints}, page arXiv:1710.09222, Oct 2017.

\bibitem{EdidinGraham}
Dan Edidin and William Graham.
\newblock Equivariant intersection theory.
\newblock {\em Invent. Math.}, 131(3):595--634, 1998.

\bibitem{Fulton}
William Fulton.
\newblock {\em Intersection theory}, volume~2 of {\em Ergebnisse der Mathematik
  und ihrer Grenzgebiete. 3. Folge. A Series of Modern Surveys in Mathematics
  [Results in Mathematics and Related Areas. 3rd Series. A Series of Modern
  Surveys in Mathematics]}.
\newblock Springer-Verlag, Berlin, second edition, 1998.

\bibitem{Totaro}
Burt Totaro.
\newblock {\em Group cohomology and algebraic cycles}, volume 204 of {\em
  Cambridge Tracts in Mathematics}.
\newblock Cambridge University Press, Cambridge, 2014.

\bibitem{Voisin}
Claire Voisin.
\newblock {\em Hodge theory and complex algebraic geometry. {II}}, volume~77 of
  {\em Cambridge Studies in Advanced Mathematics}.
\newblock Cambridge University Press, Cambridge, 2003.
\newblock Translated from the French by Leila Schneps.

\end{thebibliography}

\end{document}